\newtheorem{theorem}{Theorem}
\newtheorem{lem}[theorem]{Lemma}
\newtheorem{prop}[theorem]{Proposition}
\newtheorem*{theorem*}{Theorem} 
\theoremstyle{definition}
\theoremstyle{remark}
\newtheorem{remark}[theorem]{Remark}
\numberwithin{equation}{section}
\begin{document}

\title[Local ill-posedness of the Euler equations in $B^1_{\infty,1}$] {Local ill-posedness of the Euler equations 
in $B^1_{\infty,1}$}

\author{Gerard Misio\l ek}
\address{Department of Mathematics, University of Colorado, Boulder, CO 80309-0395, USA 
and 
Department of Mathematics, University of Notre Dame, IN 46556, USA} 
\email{gmisiole@nd.edu} 

\author{Tsuyoshi Yoneda}
\address{Department of Mathematics, Tokyo Institute of Technology, Meguro-ku, Tokyo 152-8551, Japan} 
\email{yoneda@math.titech.ac.jp}
\thanks{The second author was partially supported by JSPS KAKENHI Grant Number 25870004.} 

\subjclass[2000]{Primary 35Q35; Secondary 35B30}

\date{\today} 


\keywords{Euler equations, ill-posedness, Lagrangian flow, Besov space} 

\begin{abstract}
We show that the incompressible Euler equations on $\mathbb{R}^2$ are not locally well-posed 
in the sense of Hadamard in the Besov space $B^1_{\infty,1}$. 
Our approach relies on the technique of Lagrangian deformations of Bourgain and Li. 
We show that the assumption that the data-to-solution map is continuous in $B^1_{\infty,1}$ 
leads to a contradiction with a well-posedness result in $W^{1,p}$ of Kato and Ponce. 
\end{abstract}

\maketitle

\section{Introduction} 
\label{sec:Intro} 

The study of the Cauchy problem for the Euler equations 
\begin{align} \label{eq:Euler-u} 
&u_t + u{\cdot}\nabla u + \nabla\pi = 0, 
\qquad 
t \geq 0, \, x \in \mathbb{R}^n 
\\ \label{eq:Euler-uu} 
&\mathrm{div}\, u = 0 
\\ \label{eq:Euler-u-ic} 
&u(0) = u_0 
\end{align} 
has a long history going back to the works of Gyunter \cite{Gu}, Lichtenstein \cite{Li} 
and Wolibner \cite{Wo} in the late 1920's and 1930's. 
Tremendous progress has been made since those pioneering papers and we refer to 
several excellent monographs and surveys for example Majda and Bertozzi \cite{MB}, 
Constantin \cite{Co} or Bahouri, Chemin and Danchin \cite{BCD} 
for detailed accounts. Nevertheless, the problems related to the phenomenon of turbulence 
and persistence of smooth solutions in 3D for all time remain open. 
Furthermore, despite extensive studies of local well-posedness for the Euler equations 
our understanding of this problem especially in the cases of important borderline spaces 
including $C^1$, $\mathrm{Lip}$, $B^1_{p,q}$, $W^{n/p +1,p}$ etc. has also remained incomplete.
However, this picture is changing fast. 

Recall that a Cauchy problem is \textit{locally well-posed} in a Banach space $X$ (in the sense of Hadamard) 
if for any initial data in $X$ there exist $T>0$ and a unique solution which persists in the space $C([0,T), X)$ 
and which depends continuously on the data. Otherwise, the problem is said to be ill-posed. 
 
A few years ago Bardos and Titi \cite{BT} used a shear flow of DiPerna and Majda \cite{DM} 
to construct solutions in 3D with an instantaneous loss of regularity in H\"{o}lder $C^\alpha$ 
and Zygmund $B^1_{\infty, \infty}$ spaces. More precisely, they found $C^\alpha$ initial data 
for which the corresponding (weak) solution does not belong to $C^\beta$ for any $1>\beta > \alpha^2$ 
and any $t>0$. This technique has also been used to obtain similar results 
in the Triebel-Lizorkin $F^1_{\infty, 2}$ space by Bardos, Lemarie and Titi and 
in the logarithmic Lipschitz spaces $\mathrm{logLip}^\alpha$ by the authors \cite{MY}.
 
More recently, in a breakthrough paper Bourgain and Li \cite{BL} employed both Lagrangian and Eulerian 
techniques to obtain strong local ill-posedness results 
in borderline Sobolev spaces $W^{n/p+1,p}$ for any $1 \leq p < \infty$ and 
in Besov spaces $B^{n/p+1}_{p,q}$ with $1 \leq p < \infty$ and $1 < q \leq \infty$ and $n=2$ or $3$. 
%
In \cite{MY1} the authors adapted the approach of \cite{BL} to settle (in the 2D case) 
a long standing open question of local ill-posedness in the classical $C^1$ space 
by showing that the assumption on continuity of the data-to-solution map in $C^1$ leads to 
a contradiction with a results of Kato and Ponce \cite{KP} for $W^{1,p}$. 
Almost simultaneously Elgindi and Masmoudi \cite{EM} and Bourgain and Li \cite{BL1} produced 
similar results using different methods. It now seems that a complete resolution of local ill-posedness 
questions for the Euler equations including various borderline spaces is fully within reach. 
In fact, the main results of \cite{BL1} show that the Euler equations are ill-posed in the $C^m$ spaces 
for any integer $m \geq 1$ which is surprising in light of the local well-posedness results 
for the Cauchy problem \eqref{eq:Euler-u}-\eqref{eq:Euler-u-ic} in $C^{1, \alpha}$ for any $0 < \alpha <1$. 

The goal of this paper is to settle the end-point case of the Besov space of smoothness order $1$ 
and infinite (primary) integrability index. 
Recall that according to a recent result of Pak and Park \cite{PP} the Cauchy problem 
\eqref{eq:Euler-u}-\eqref{eq:Euler-u-ic} admits a unique solution in $B^1_{\infty, 1}(\mathbb{R}^n)$. 
It is interesting to observe that in order to establish uniqueness they first show that 
the data-to-solution map $u_0 \to u$ is continuous (even Lipschitz) into $B^0_{\infty,1}$ (cf. \cite{PP}, Section 4). They do not prove that it is continuous into $B^1_{\infty, 1}$ (and consequently that the Euler equations 
are locally well-posed in the sense of Hadamard in $B^1_{\infty, 1}$). 
Our main result is 
\begin{theorem} \label{thm:0} 
The 2D incompressible Euler equations are locally ill-posed in the Besov space $B^1_{\infty, 1}$.  
\end{theorem}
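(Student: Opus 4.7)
The plan is to argue by contradiction, adapting the Lagrangian deformation technique of Bourgain and Li \cite{BL} as used in our earlier work \cite{MY1} for the $C^1$ case. Suppose the data-to-solution map is continuous from $B^1_{\infty,1}$ into $C([0,T_0]; B^1_{\infty,1})$ on some interval $[0,T_0]$. The aim is to construct a sequence of compactly supported divergence-free perturbations $w_0^{(n)}$ of a fixed smooth background initial datum $\bar u_0$ such that $\|w_0^{(n)}\|_{B^1_{\infty,1}} \to 0$, yet
\[
\liminf_{n \to \infty} \|u^{(n)}(t_*) - \bar u(t_*)\|_{B^1_{\infty,1}} > 0
\]
at some $t_* \in (0,T_0)$, where $u^{(n)}$ is the solution with initial data $\bar u_0 + w_0^{(n)}$. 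This immediately contradicts the assumed continuity.

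For the construction, I would first pick $\bar u_0 \in C_c^\infty(\mathbb{R}^2)$ whose flow has a hyperbolic stagnation point at the origin on $[0,T_0]$, so that locally $\nabla \bar u(t,0)$ has eigenvalues $\pm a$ for some $a > 0$; this is the background geometry exploited in \cite{BL, MY1}. Then I would take $w_0^{(n)} = \nabla^\perp \psi_n$ with the stream function $\psi_n$ a small, highly oscillatory bump supported in a ball of radius $\sim 2^{-n}$ around the origin, with central frequency of order $2^n$ and amplitude $\epsilon_n$ tuned so that $\|w_0^{(n)}\|_{B^1_{\infty,1}} \to 0$. Since $u_0^{(n)} := \bar u_0 + w_0^{(n)}$ is smooth with compact support, it lies in $W^{1,p}$ for every $p \in (1,\infty)$, so by the theorem of Kato and Ponce \cite{KP} there is a unique $W^{1,p}$ solution on a time interval that, using that $\|\mathrm{curl}\, w_0^{(n)}\|_{L^\infty}$ is bounded uniformly in $n$, may be taken to include $[0,T_0]$. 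By uniqueness together with \cite{PP} this coincides with the $B^1_{\infty,1}$ solution; in particular $u^{(n)}(t)$ is Lipschitz uniformly in $n$, so its Lagrangian flow $\Phi^{(n)}_t$ is well-defined and enjoys bi-Lipschitz bounds independent of $n$.

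The heart of the argument is then to track the perturbation vorticity via the transport identity $\omega^{(n)}(t, \Phi^{(n)}_t(x)) = \omega^{(n)}_0(x)$. Because of the background hyperbolicity at the origin, $\Phi^{(n)}_t$ stretches the support of $\omega^{(n)}_0 - \bar\omega_0$ in one direction and contracts it in the other at rates $\sim e^{\pm a t}$, so the Fourier support of $\omega^{(n)}(t_*) - \bar\omega(t_*)$ spreads across dyadic bands with indices roughly $n$ to $n + c t_*$. A careful Littlewood-Paley analysis, combined with the Biot-Savart law to recover $u^{(n)} - \bar u$ from the vorticity difference, should then yield a uniform lower bound $\|u^{(n)}(t_*) - \bar u(t_*)\|_{B^1_{\infty,1}} \gtrsim c t_* > 0$, independent of $n$.

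The main obstacle I anticipate is establishing this quantitative Besov lower bound on the deformed perturbation. The key difference with the $C^1$ case of \cite{MY1} is the $\ell^1$ summation over dyadic blocks in $B^1_{\infty,1}$: one can in principle gain from the spreading of frequencies across many bands, but one must control each excited scale from below uniformly while simultaneously keeping $\|w_0^{(n)}\|_{B^1_{\infty,1}}$ small. This will require a careful geometric, essentially stationary-phase, analysis of the sheared bump, with the Kato-Ponce $W^{1,p}$ bounds ensuring that the linearized Lagrangian approximation near the hyperbolic point remains accurate on a time scale $t_*$ independent of $n$.
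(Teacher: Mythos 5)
Your overall framework (argue by contradiction with continuity of the data-to-solution map, use a background flow with hyperbolic stretching, transport a high-frequency perturbation by the Lagrangian flow) is the right family of ideas, but the step you yourself flag as the ``main obstacle'' is a genuine gap, and with the construction as you have set it up it cannot be closed. If $\|w_0^{(n)}\|_{B^1_{\infty,1}}\to 0$ then $\|\mathrm{curl}\, w_0^{(n)}\|_{L^\infty}\to 0$, and since your background $\bar u_0$ is a \emph{fixed} smooth field, the flows $\Phi^{(n)}_t$ and $\bar\Phi_t$ are bi-Lipschitz on $[0,t_*]$ with constants uniform in $n$. A fixed bi-Lipschitz volume-preserving deformation acting over a fixed time spreads the Littlewood--Paley support of the perturbation vorticity over only $O(1)$ dyadic bands --- this is exactly your own count, ``indices roughly $n$ to $n+ct_*$'' --- so the $\ell^1$ sum defining $\|\omega^{(n)}(t_*)-\bar\omega(t_*)\|_{B^0_{\infty,1}}$ has boundedly many relevant terms, each controlled by $\|\omega^{(n)}(t_*)-\bar\omega(t_*)\|_{L^\infty}$, which tends to $0$ (the transported difference of the data plus a flow-stability error). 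Hence $\|u^{(n)}(t_*)-\bar u(t_*)\|_{B^1_{\infty,1}}\to 0$ and no contradiction is reached. To make the $\ell^1$ frequency summation work in your favour you would need the number of excited bands to diverge, i.e.\ an unbounded flow gradient, which a fixed $O(1)$ hyperbolic stagnation point cannot supply.

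The paper circumvents this in two ways, both of which are missing from your plan. First, the background vorticity is not a fixed smooth field but the Bourgain--Li sum of dyadic bumps $\omega_0^{M,N}$ of \eqref{eq:iv}, tuned so that $\sup_{0\le t\le M^{-3}}\|D\eta(t)\|_\infty>M$ with $M$ arbitrarily large (Proposition \ref{prop:Lag}); the unbounded parameter $M$ is essential. Second, and more importantly, the contradiction is not closed inside $B^1_{\infty,1}$ at all: the continuity hypothesis in $B^1_{\infty,1}$ is used only through the embedding $B^1_{\infty,1}\subset C^1$ and the comparison Lemma \ref{lem:comp}, to guarantee that the perturbed flows $\eta_n$ inherit the large gradient of $\eta$ near the point $x^\ast$. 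The norm inflation itself is then detected in $W^{1,p}$ with $p>2$ close to $2$, where a single large entry $\partial_2\eta_2(t_0)\sim M$ acting on the oscillation direction of $\beta_n$ suffices and no summation over frequency bands is needed (Lemma \ref{lem:remrem}); this contradicts the Kato--Ponce $W^{1,p}$ theory. In your proposal Kato--Ponce appears only as an existence and regularity tool, whereas in the paper it is the other jaw of the vice. Without either the unbounded stretching of the background or the change of the space in which the inflation is measured, your argument does not produce the required lower bound.
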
 
As in our previous paper \cite{MY1} 
we will work with the vorticity equations. 
Recall that in two dimensions the vorticity of a vector field $u$ is a 2-form 
$\omega = d u^\flat$ which is identified with the function 
$$ 
\omega = \mathrm{rot}\, u = - \frac{\partial u_1}{\partial x_2} + \frac{\partial u_2}{\partial x_1}. 
$$ 
In this case the Cauchy problem \eqref{eq:Euler-u}-\eqref{eq:Euler-u-ic} can be rewritten as 
\begin{align} \label{eq:euler-v} 
&\omega_t + u{\cdot}\nabla \omega = 0, 
\qquad 
t \geq 0, \; x \in \mathbb{R}^2 
\\  \label{eq:euler-vic} 
&\omega(0) = \omega_0 
\end{align} 
where the velocity is recovered from $\omega$ using the Biot-Savart law 
\begin{equation} \label{eq:Biot-Savart} 
u = K \ast \omega = \nabla^\perp \Delta^{-1} \omega
\end{equation} 
with kernel $K(x) {=} (2\pi)^{-1}(-x_2/|x|^2, x_1/|x|^2)$ and where 
$\nabla^\perp {=} (-\frac{\partial}{\partial x_2}, \frac{\partial}{\partial x_1})$ 
denotes the symplectic gradient of a function. 

Our strategy is similar to that adopted for the $C^1$ case in \cite{MY1}. 
Namely, following \cite{BL} we first choose an initial vorticity $\omega_0$ such that 
the Lagrangian flow of the corresponding velocity field retains a large gradient 
on a (possibly short) time interval. 
We then perturb $\omega_0$ to get a sequence of initial vorticities in $W^{1,p}$. 
Finally, we show that the assumption that the Euler equations are well-posed 
in $B^1_{\infty.1}(\mathbb{R}^2)$ 
(in particular, that the solutions depend continuously on the initial data) 
leads to a contradiction with the following result of Kato and Ponce 
\begin{theorem*}[Kato-Ponce \cite{KP}] 
Let $1{<}p{<}\infty$ and $s{>}1{+}\frac{2}{p}$. For any $\omega_0 \in W^{s-1, p}(\mathbb{R}^2)$ 
and any $T>0$ there exists a constant $K=K(T,\omega,s,p)>0$ such that 
$$
\sup_{0 \leq t \leq T}\| \omega(t)\|_{W^{s-1,p}} \leq K. 
$$ 
\end{theorem*}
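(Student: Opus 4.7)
The plan is a standard energy--commutator argument applied to the vorticity equation \eqref{eq:euler-v}. First, I would apply the Bessel potential $\Lambda^{s-1}=(I-\Delta)^{(s-1)/2}$ to \eqref{eq:euler-v}, producing
\[
\partial_t \Lambda^{s-1}\omega + u\cdot\nabla\Lambda^{s-1}\omega = -[\Lambda^{s-1},u\cdot\nabla]\omega,
\]
and then take the $L^p$ inner product with $|\Lambda^{s-1}\omega|^{p-2}\Lambda^{s-1}\omega$. The transport term on the left vanishes after integration by parts because $u$ is divergence-free, yielding
\[
\frac{d}{dt}\|\omega\|_{W^{s-1,p}} \leq C\,\|[\Lambda^{s-1},u\cdot\nabla]\omega\|_{L^p}.
\]

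The right-hand side is handled by the classical Kato--Ponce fractional Leibniz (commutator) estimate, which together with the Biot--Savart law \eqref{eq:Biot-Savart} and Calder\'on--Zygmund theory (providing $\|u\|_{W^{s,p}} \leq C\|\omega\|_{W^{s-1,p}}$) gives
\[
\|[\Lambda^{s-1},u\cdot\nabla]\omega\|_{L^p} \leq C\bigl(\|\nabla u\|_{L^\infty}+\|\omega\|_{L^\infty}\bigr)\|\omega\|_{W^{s-1,p}}.
\]
The $L^\infty$ norm of vorticity is then controlled for free by the two-dimensional conservation law $\|\omega(t)\|_{L^\infty}=\|\omega_0\|_{L^\infty}$, which holds because $\omega$ is transported along the measure-preserving Lagrangian flow of $u$.

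The remaining difficulty---and the main obstacle---is $\|\nabla u\|_{L^\infty}$, since Calder\'on--Zygmund alone would only bound it by $\|\omega\|_{W^{s-1,p}}$, closing the Gronwall inequality quadratically and producing only a short-time estimate. I would circumvent this via the Beale--Kato--Majda/Brezis--Gallou\"{e}t-type logarithmic interpolation
\[
\|\nabla u\|_{L^\infty} \leq C\bigl(1+\|\omega\|_{L^\infty}\log(e+\|\omega\|_{W^{s-1,p}})\bigr),
\]
valid because the hypothesis $s-1>2/p$ forces the embedding $W^{s-1,p}(\mathbb{R}^2)\hookrightarrow L^\infty$. Substituting this and using $\|\omega(t)\|_{L^\infty}=\|\omega_0\|_{L^\infty}$, and setting $f(t)=\log(e+\|\omega(t)\|_{W^{s-1,p}})$, I obtain a linear differential inequality $f'(t)\leq C(1+\|\omega_0\|_{L^\infty})f(t)$ whose Gronwall integration yields a double-exponential but globally finite bound, producing the required constant $K=K(T,\omega_0,s,p)$ on any prescribed time interval $[0,T]$.
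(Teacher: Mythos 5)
The theorem you were asked to prove is not actually proved in this paper: it is imported as a black box from the cited reference of Kato and Ponce, and the paper's ``proof'' consists of that citation. So there is no internal argument to compare yours against; what you have produced is a reconstruction of the classical persistence argument, and in outline it is the right one --- the same circle of ideas (an $L^p$ energy/commutator estimate for the vorticity equation, a Kato/Beale--Kato--Majda logarithmic inequality for $\|\nabla u\|_{L^\infty}$, conservation of $\|\omega\|_{L^\infty}$ along the volume-preserving flow, and a Gronwall argument on $\log\bigl(e+\|\omega\|_{W^{s-1,p}}\bigr)$ giving a double-exponential but finite bound) that underlies the original two-dimensional Kato--Ponce result.

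One step, however, is justified more casually than it deserves, and as literally stated it would fail. The ``classical Kato--Ponce fractional Leibniz (commutator) estimate,'' applied directly with $f=u$ and $g=\nabla\omega$, yields
\[
\bigl\|[\Lambda^{s-1},u]\cdot\nabla\omega\bigr\|_{L^p}
\;\lesssim\;
\|\nabla u\|_{L^\infty}\,\|\Lambda^{s-1}\omega\|_{L^p}
\;+\;
\|\Lambda^{s-1}u\|_{L^p}\,\|\nabla\omega\|_{L^\infty},
\]
and the factor $\|\nabla\omega\|_{L^\infty}$ is \emph{not} controlled under the hypothesis $s-1>2/p$ (that would require $s>2+2/p$). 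To obtain the inequality you actually wrote, with $\|\omega\|_{L^\infty}$ rather than $\|\nabla\omega\|_{L^\infty}$ in the extreme term, you must exploit the divergence structure $u\cdot\nabla\omega=\mathrm{div}(u\,\omega)$, so that the commutator becomes $\mathrm{div}\bigl([\Lambda^{s-1},u]\,\omega\bigr)$ and the excess derivative is shifted onto $u$, where Calder\'on--Zygmund theory gives $\|\nabla u\|_{W^{s-1,p}}\lesssim\|\omega\|_{W^{s-1,p}}$. This transport-adapted commutator lemma is true and standard, but it is a genuinely different (paraproduct-type) statement from the plain fractional Leibniz rule, and your Gronwall loop does not close without it. Two smaller points: the logarithmic inequality carries a low-frequency term (e.g.\ $\|\omega\|_{L^p}$ or $\|u\|_{L^2}$), which is harmless here because all $L^q$ norms of the vorticity are conserved; and strictly speaking the a priori bound must be coupled with local existence and a continuation criterion to guarantee the solution persists on all of $[0,T]$ --- in 2D the same estimate supplies exactly this. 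With these repairs your outline is a correct proof of the quoted theorem.
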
 

Theorem \ref{thm:0} will be a consequence of the following result 
\begin{theorem} \label{thm:1} 
Let $2 < p < \infty$. Assume that the vorticity equations 
\eqref{eq:euler-v}-\eqref{eq:euler-vic} are well-posed in $B^0_{\infty,1}(\mathbb{R}^2)$. 
There exist $T>0$ and a sequence $\omega_{0,n}$ in $C^\infty_c(\mathbb{R}^2)$ 
with the following properties 

1. there exists a constant $C>0$ such that 
$\| \omega_{0,n} \|_{W^{1.p}} \leq C$ for all $n \in \mathbb{Z}_+$ 

\noindent and 

2. for any $M \gg 1$ there is $0 < t_0 \leq T$ such that 
$\| \omega_{n}(t_0)\|_{W^{1,p}} \geq M^{1/3}$ 
for all sufficiently large $n$ and all $p$ sufficiently close to $2$. 
\end{theorem}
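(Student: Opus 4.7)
The plan is to adapt the Bourgain--Li Lagrangian deformation scheme \cite{BL}, as used in the authors' earlier $C^1$ paper \cite{MY1}, to this endpoint Besov setting. First I would invoke from \cite{BL} the construction of a reference vorticity $\omega_0$: a carefully tuned superposition of widely separated smooth vortex patches on geometrically decreasing scales, chosen so that its Lagrangian flow $\phi$ exhibits a localized region in which, at some small time $t_0 \in (0,T]$, $\nabla\phi(t_0)$ stretches by any preassigned factor $M \gg 1$ in one direction -- and, by incompressibility, compresses by $1/M$ in the orthogonal direction -- while $u = K \ast \omega_0$ remains near-Lipschitz.

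Next I would build the sequence $\omega_{0,n} \in C^\infty_c(\mathbb{R}^2)$ by combining a smooth truncation of $\omega_0$ at a far scale with moderately high-frequency, small-amplitude bumps deposited near the stretching center. These are tuned so that $\omega_{0,n} \to \omega_0$ in $B^0_{\infty,1}$ with $\|\omega_{0,n}\|_{W^{1,p}} \leq C$ uniformly in $n$, yielding property~1. The hypothesis $p > 2$ is essential here because the embedding $W^{1,p}(\mathbb{R}^2) \hookrightarrow C^{0,1-2/p}$ (with a strictly positive H\"older exponent) provides the margin needed to insert $B^0_{\infty,1}$-small but $W^{1,p}$-nontrivial perturbations. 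The assumed $B^0_{\infty,1}$ well-posedness then gives $\omega_n(t) \to \omega(t)$ in $B^0_{\infty,1}$ uniformly on $[0,t_0]$, and Biot--Savart \eqref{eq:Biot-Savart} promotes this to closeness of $u_n$ to $u$ in a norm strong enough to compare the Lagrangian flows $\phi_n$ with $\phi$, so that the stretching geometry of $\phi$ persists in $\phi_n$ for all large $n$.

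For property~2, transport $\omega_n(t_0) = \omega_{0,n} \circ \phi_n^{-1}(t_0)$ together with the chain rule $\nabla\omega_n(t_0) = (\nabla\phi_n^{-1})^T (\nabla\omega_{0,n}) \circ \phi_n^{-1}$, the $M$-stretch of $\nabla\phi_n^{-1}$, and incompressibility $\det\nabla\phi_n = 1$ forces $\|\nabla\omega_n(t_0)\|_{L^p}$ to be amplified by a factor comparable to $M$ on the stretching set -- provided the bump's gradient is oriented along the stretched direction -- yielding $\|\omega_n(t_0)\|_{W^{1,p}} \gtrsim M^{1/3}$. The fractional exponent $1/3$ is a conservative threshold chosen to absorb the comparison errors from $\phi_n \approx \phi$ and $\omega_{0,n} \approx \omega_0$; this establishes Theorem~\ref{thm:1}, from which Theorem~\ref{thm:0} follows since the resulting growth contradicts the stated Kato--Ponce bound applied with $s = 2$ (which requires precisely $p > 2$).

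The hardest step will be the joint tuning in the middle: the perturbation bumps must be simultaneously $B^0_{\infty,1}$-small enough that continuous dependence pins $\phi_n$ close to $\phi$ on the stretching region, yet $W^{1,p}$-substantial enough that, after amplification by a factor of order $M$, they dominate the prescribed threshold $M^{1/3}$. Controlling $u_n - u$ in a Lipschitz-type norm starting from only $B^0_{\infty,1}$ closeness -- via the logarithmic singularity of the Biot--Savart kernel and the precise support separation between the background vortex patches and the perturbation bumps -- is the delicate technical point that the proof will need to handle.
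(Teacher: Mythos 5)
Your proposal follows essentially the same route as the paper: the Bourgain--Li reference vorticity with an $M$-fold Lagrangian stretch, a sequence of high-frequency, small-amplitude bumps near the stretching point that are $B^0_{\infty,1}$-small yet uniformly $W^{1,p}$-bounded, transfer of the assumed continuity through Biot--Savart to a $C^1$ comparison of the flows, and the chain-rule amplification of the perturbation's gradient by the factor $M$. The one mechanism you misattribute is why the perturbation can be $B^0_{\infty,1}$-small with $O(1)$ norm in $W^{1,p}$: this comes not from the Morrey embedding for $p>2$ but from Fourier-localizing each bump in a single Littlewood--Paley shell, so that $\|\beta_n\|_{B^0_{\infty,1}}\simeq\|\beta_n\|_{L^\infty}\to 0$ while the modulation $\sin(kx_1)$ keeps $\|\nabla\beta_n\|_{L^p}$ of order one (the paper's Lemmas \ref{lem:rem} and \ref{lem:fi}); the hypothesis $p>2$ is needed instead for the Kato--Ponce bound with $s=2$ and for the normalization of $\omega_0$.
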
 
In Section \ref{sec:Lag} we provide some technical tools 
and construct an initial vorticity whose Lagrangian flow has a large gradient. 
Since some of the constructions are analogous to those in \cite{MY1} some details are omitted. 
The proof of Theorem \ref{thm:1} is given in Section \ref{sec:proof}. 

\begin{remark} 
In this paper we 
do not employ the "patching" argument of \cite{BL} which leaves open the question of 
strong ill-posedness in $B^1_{\infty, q}$ in the sense of Bourgain-Li. 
\end{remark} 
\begin{remark} 
Since we treat here non-decaying data, an analogous local ill-posedness result in 3D follows 
immediately from our 2D construction. The details will be elaborated elsewhere. 
\end{remark} 
%

\section{Vorticity and the Lagrangian flow} 
\label{sec:Lag} 

We first recall some basic harmonic analysis. 
Let $\psi$ be a smooth radial bump function on $\mathbb{R}^2$ which is supported in the unit ball $B(0,1)$ 
and equal to 1 on the ball of radius 1/2. Set $\psi_{-1}=\psi$ and let 
\begin{equation}  \label{eq:PL} 
\psi_0(\xi) = \psi(2^{-1}\xi) - \psi(\xi) 
\quad 
\mathrm{and} 
\quad 
\psi_\ell(\xi) = \psi_0 ( 2^{-\ell}\xi )
\quad 
\forall{\ell \geq 0}. 
\end{equation} 
Each $\psi_\ell$ is supported in a shell $\{ 2^{\ell-1}\leq |\xi| \leq 2^{\ell+1} \}$ 
with $\psi_\ell(\xi)=1$ when $|\xi|=2^\ell$. For any $f \in \mathcal{S}'(\mathbb{R}^2)$ define 
the frequency restriction operators by 
\begin{equation*} 
\widehat{\Delta_\ell f}(\xi) = \psi_\ell(\xi) \hat{f}(\xi) 
\quad 
\forall{\ell \geq -1}
\end{equation*} 
to obtain the usual Littlewood-Paley decomposition 
\begin{equation*} 
f = \sum_{\ell \geq -1} \Delta_\ell f 
\quad 
\mathrm{where}
\quad 
\Delta_\ell f(x) 
= 
\sum_{\xi \in \mathbb{R}^2} \psi_\ell(\xi)\hat{f}(\xi) e^{i \langle \xi, x \rangle}, 
\quad 
x \in \mathbb{R}^2. 
\end{equation*}

For any $s \in \mathbb{R}$ and $1 \leq p, q \leq \infty$ the Besov space $B^s_{p,q}(\mathbb{R}^2)$ 
is defined as the set of all $f \in \mathcal{S}'(\mathbb{R}^2)$ such that the number 
\begin{equation} \label{Besov}
\| f \|_{B^s_{p,q}} 
= \left\{ 
\begin{matrix} \displaystyle
~~~\Bigg( \sum_{\ell \geq -1}2^{sq\ell}\|\Delta_\ell f\|_{L^p}^q \Bigg)^{1/q}& 
\mathrm{if} \quad 1\leq q <\infty 
\\  \displaystyle
\sup_{\ell \geq -1}{ 2^{s\ell}} \|\Delta_k f\|_{L^p}& 
\mathrm{if} \qquad\;\; q=\infty 
\end{matrix} 
\right.
\end{equation}
is finite. 
Among many special cases of interest are the Sobolev spaces 
$W^{s,p}= B^s_{p,p}$ 
and the H\"older-Zygmund class 
$C^s = B^s_{\infty,\infty}$ both defined for any $1 \leq p < \infty$ and 
any non-integer $s > 0$. 

Next, given a radial bump function $0 \leq \varphi \leq 1$ supported in $B(0,1)$ define 
\begin{align} \label{eq:bump} 
\varphi_0(x_1, x_2) 
= 
\sum_{\varepsilon_1, \varepsilon_2 = \pm 1} 
\varepsilon_1 \varepsilon_2 \varphi(x_1 {-} \varepsilon_1, x_2 {-} \varepsilon_2) 
\end{align} 
and for a fixed positive integer $N_0 \in \mathbb{Z}_+$ and any $M \gg 1$, set 
\begin{align} \label{eq:iv} 
\omega_0(x) = \omega_0^{M,N}(x) 
= 
M^{-2} N^{-\frac{1}{p}} \sum_{N_0 \leq k \leq N_0+N} \varphi_k(x), 
\qquad 
N = 1, 2, 3 \dots 
\end{align} 
where $2< p < \infty$ and where 
\begin{align*} 
\varphi_k(x) = 2^{(-1 + \frac{2}{p})k} \varphi_0 (2^k x). 
\end{align*} 
Observe that by construction $\varphi_0$ is an odd function in both $x_1, x_2$ and 
for any $k \geq 1$ its support is compact and contained in the set 
\begin{equation} \label{eq:suppp} 
\mathrm{supp}\,{ \varphi_k } 
\subset 
\bigcup_{\varepsilon_1, \varepsilon_2 = \pm 1} 
B\big( (\varepsilon_1 2^{-k}, \varepsilon_2 2^{-k}), 2^{-(k+2)} \big). 
\end{equation} 
Combined with the uniform (in time) $L^\infty$ control of the vorticity in $\mathbb{R}^2$ 
this ensures the existence of a unique solution of the Cauchy problem \eqref{eq:euler-v}-\eqref{eq:euler-vic} 
with the initial data \eqref{eq:iv}; e.g., by a result of Yudovich \cite{Yu}, see also Majda and Bertozzi \cite{MB}. 

The construction so far parallels that of our previous paper \cite{MY1} and therefore the proofs of 
Lemma \ref{lem:omega-0} and Proposition \ref{prop:Lag} below will be omitted. 
\begin{lem} \label{lem:omega-0} 
We have 
\begin{align} \label{eq:omega-0} 
\| \omega_0 \|_{W^{1,p}} \lesssim  M^{-2} 
\end{align} 
with the bound independent of $N>0$ and $2<p<\infty$. 
\end{lem}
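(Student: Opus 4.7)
The plan is to exploit the fact that the bump functions $\varphi_k$ appearing in \eqref{eq:iv} have pairwise disjoint supports, so their $L^p$ and $W^{1,p}$ norms simply add up in the $p$-th power. The prefactor $M^{-2} N^{-1/p}$ is precisely designed to absorb this sum.

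First I would compute the scaling behavior of a single block. From $\varphi_k(x) = 2^{(-1+2/p)k}\varphi_0(2^k x)$ a routine change of variables gives
\begin{align*}
\|\varphi_k\|_{L^p} = 2^{-k} \|\varphi_0\|_{L^p},
\qquad
\|\nabla\varphi_k\|_{L^p} = \|\nabla\varphi_0\|_{L^p},
\end{align*}
the second quantity being independent of $k$, which is the whole point of the exponent $-1+2/p$. Next, the support condition \eqref{eq:suppp} shows that $\operatorname{supp}\varphi_k$ lies in four balls of radius $2^{-(k+2)}$ centered at $(\pm 2^{-k},\pm 2^{-k})$, and a moment's thought confirms these shrinking clusters are disjoint across different $k \geq N_0$ (for $N_0$ large enough, as fixed in the paper).

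By disjointness,
\begin{align*}
\Big\|\sum_{N_0\leq k\leq N_0+N}\varphi_k\Big\|_{L^p}^p
= \sum_{k} \|\varphi_k\|_{L^p}^p
\lesssim \sum_{k\geq N_0} 2^{-kp} \lesssim 1,
\end{align*}
and similarly
\begin{align*}
\Big\|\nabla\sum_{N_0\leq k\leq N_0+N}\varphi_k\Big\|_{L^p}^p
= \sum_{k} \|\nabla\varphi_k\|_{L^p}^p
\lesssim N.
\end{align*}
Combining these and multiplying by the prefactor $M^{-2}N^{-1/p}$ from \eqref{eq:iv} yields
\begin{align*}
\|\omega_0\|_{W^{1,p}} \lesssim M^{-2}N^{-1/p}\bigl(1+N^{1/p}\bigr) \lesssim M^{-2},
\end{align*}
uniformly in $N$ and in $p \in (2,\infty)$ (since the implicit constants depend only on $\|\varphi_0\|_{W^{1,p}}$, which is controlled uniformly for $p$ in a compact range because $\varphi_0$ is smooth and compactly supported).

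There is no real obstacle here; the only thing to be careful about is checking that the implicit constants genuinely do not blow up as $p \to 2^+$ or $p \to \infty$, which follows from the smoothness and compact support of $\varphi_0$. The whole estimate is essentially bookkeeping around the carefully chosen normalization $2^{(-1+2/p)k}$ that makes $\|\nabla\varphi_k\|_{L^p}$ scale-invariant, together with the geometric decay of $\|\varphi_k\|_{L^p}$ which prevents the zero-order term from producing any logarithmic loss in $N$.
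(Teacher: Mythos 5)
Your proof is correct and is exactly the scaling-plus-disjoint-supports computation that the paper defers to \cite{MY1} (Lemma 3): the normalization $2^{(-1+2/p)k}$ makes $\|\nabla\varphi_k\|_{L^p}$ independent of $k$, the disjointness from \eqref{eq:suppp} turns the sum into an $\ell^p$ sum, and the prefactor $N^{-1/p}$ cancels the resulting $N^{1/p}$. The only cosmetic point is that uniformity should be claimed over all $2<p<\infty$, not just a compact range of $p$, but this is immediate since $\varphi_0$ is bounded with bounded support, so $\|\varphi_0\|_{W^{1,p}}\leq C$ uniformly in $p$.
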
 
\begin{proof} 
See \cite{MY1}; Lemma 3. 
\end{proof} 
Since $p>n=2$ the results of Kato and Ponce \cite{KP} (cf. Lemma 3.1; Thm. III) 
imply that there exists a unique velocity field $u \in C^1([0, \infty), W^{2,p}(\mathbb{R}^2))$ 
solving the problem \eqref{eq:Euler-u}-\eqref{eq:Euler-uu} and whose vorticity function 
$\omega \in C([0,\infty), W^{1,p}(\mathbb{R}^2))$ satisfies the initial condition \eqref{eq:iv}. 

The associated Lagrangian flow $\eta(t)$ of $u = \nabla^\perp\Delta^{-1}\omega$ 
is a solution of the initial value problem 
\begin{align} \label{eq:flow} 
&\frac{d}{dt}\eta(t,x) = u(t, \eta(t,x)) \; \big( {=} F_u(\eta(t,x)) \big) 
\\  \label{eq:flow-ic} 
&\eta(0,x) = x 
\end{align} 
and defines a curve in the group of volume-preserving diffeomorphisms such that  
$\omega\circ\eta \in C([0, \infty), W^{1,p}(\mathbb{R}^2))$, 
see e.g., \cite{KP} or \cite{BB}. 
It can be readily checked that the odd symmetry of $\omega_0$ is preserved by $\eta$ 
and thus (by conservation of vorticity in 2D) is retained by $\omega$ for all time. 
In this case the Biot-Savart law \eqref{eq:Biot-Savart} implies that the velocity field $v$ 
must be symmetric in the variables $x_1$ and $x_2$ so that both coordinate axes 
are invariant under $\eta$ with the origin $x_1=x_2=0$ a hyperbolic stagnation point. 

Observe that if 
$\xi: \mathbb{R}^2 \to \mathbb{R}^2$ is a volume-preserving diffeomorphism then 
the Jacobi matrix of its inverse $\xi^{-1}$ can be computed from 
\begin{align*} 
D\xi^{-1} 
= 
(D\xi)^{-1}\circ\xi^{-1} 
= 
\left( 
\begin{matrix} 
\partial_2\xi_2 {\circ} \xi^{-1} & -\partial_2 \xi_1{\circ} \xi^{-1} 
\\ 
-\partial_1 \xi_2{\circ}\xi^{-1} & \partial_1 \xi_1{\circ} \xi^{-1} 
\end{matrix} 
\right) 
\end{align*} 
so that for any smooth function $f: \mathbb{R}^2 \to \mathbb{R}$ we have 
\begin{equation} \label{eq:SGr} 
\nabla( f \circ \xi^{-1}) 
=  
\big( 
{-}\nabla{f} \circ \xi^{-1} \cdot \nabla^\perp{\xi_2} \circ \xi^{-1}, 
\nabla{f} \circ \xi^{-1} \cdot \nabla^\perp{\xi_1} \circ \xi^{-1} 
\big) 
\end{equation} 
where $\nabla^\perp$ is the symplectic gradient as in \eqref{eq:Biot-Savart}. 
\begin{prop} \label{prop:Lag} 
Let $\eta(t)$ be the flow of the velocity field $u=\nabla^\perp\Delta^{-1}\omega$ with initial vorticity 
given by \eqref{eq:iv}. Given $M \gg 1$ we have 
$$ 
\sup_{0 \leq t \leq M^{-3}} \| D\eta(t) \|_\infty > M 
$$ 
for any sufficiently large integer $N>0$ in \eqref{eq:iv} and any $2<p<\infty$ 
sufficiently close to $2$. 
\end{prop}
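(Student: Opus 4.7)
My plan is to drive the gradient of $\eta$ by the exponential stretching at the hyperbolic stagnation point sitting at the origin. By the odd symmetry of $\omega_0$ in both $x_1$ and $x_2$, which is preserved by the Euler evolution as noted in the excerpt, $\eta(t, 0) = 0$ for all $t \geq 0$, and the Biot-Savart law forces
\begin{equation*}
Du(t, 0) \;=\; \mathrm{diag}\bigl(\lambda(t),\, -\lambda(t)\bigr), \qquad \text{where}\ \lambda(t) = \partial_{x_1} u_1(t, 0).
\end{equation*}
The variational equation for $D\eta(t, 0)$ is then decoupled and diagonal, so $\|D\eta(t, 0)\|_\infty \geq e^{\Lambda(t)}$ with $\Lambda(t) = \int_0^t \lambda(s)\,ds$, and it suffices to exhibit $N$ and $p$ for which $\Lambda(M^{-3}) > \log M$.

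First I would compute $\lambda(0)$. Writing $\partial_1 u_1 = -\partial_1 \partial_2 \Delta^{-1} \omega$ and taking two derivatives of the two-dimensional Newtonian potential, one obtains
\begin{equation*}
\lambda(0) \;=\; \frac{1}{\pi} \int_{\mathbb{R}^2} \frac{y_1 y_2}{|y|^4}\, \omega_0(y)\,dy.
\end{equation*}
Substituting \eqref{eq:iv} and rescaling $z = 2^k y$ in each summand, the dilation of $\varphi_k$ cancels the $-2$-homogeneous kernel, and every scale $k$ contributes the same positive value $I_0 = \pi^{-1} \int \frac{z_1 z_2}{|z|^4}\varphi_0(z)\,dz > 0$; the positivity follows from \eqref{eq:bump}, since the four signed translates add constructively against the quadrupole kernel. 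The resulting geometric sum
\begin{equation*}
\lambda(0) \;=\; M^{-2} N^{-1/p}\, I_0 \sum_{k = N_0}^{N_0+N} 2^{(-1+2/p)k}
\end{equation*}
is of order $N$ when one balances $(p-2) N \lesssim 1$, yielding $\lambda(0) \gtrsim M^{-2} N^{1-1/p} \gtrsim M^{-2} N^{1/2}$ for $p$ close to $2$.

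Next I would show $\lambda(t) \geq \tfrac{1}{2}\lambda(0)$ for all $t \in [0, M^{-3}]$. Vorticity transport gives $\omega(t, y) = \omega_0(\eta^{-1}(t, y))$ and preserves both $\|\omega\|_{L^1}$ and $\|\omega\|_{L^\infty}$, so the classical bound $\|u\|_\infty \lesssim \|\omega\|_{L^1}^{1/2} \|\omega\|_{L^\infty}^{1/2}$ gives $\|u(t)\|_\infty \lesssim M^{-2} N^{-1/p}$ and hence $\|\eta(t) - \mathrm{id}\|_\infty \lesssim M^{-5} N^{-1/p}$ on the interval. The change of variables $y = \eta(t, z)$ recasts $\lambda(t)$ as a singular integral against the transported kernel $\eta_1 \eta_2/|\eta|^4$, and splitting into near- and far-origin pieces while invoking the $W^{1,p}$ bound of Lemma \ref{lem:omega-0} and the Morrey embedding $W^{1,p} \hookrightarrow L^\infty$ (valid for $p > 2$) yields the required stability.

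Combining these estimates, $\Lambda(M^{-3}) \geq \tfrac{1}{2} M^{-3} \lambda(0) \gtrsim M^{-5} N^{1/2}$, and choosing $N \gtrsim M^{10}(\log M)^2$ with $p$ correspondingly close to $2$ makes $e^{\Lambda(M^{-3})} > M$. The main technical obstacle is the stability step: since the Riesz transforms are unbounded on $L^\infty$, the $L^\infty$-smallness of $\eta - \mathrm{id}$ cannot be passed naively through the singular integral, and one must exploit the concentration of $\omega_0$ on dyadic shells together with the Kato-Ponce $W^{1,p}$ control to close the estimate in the parameter regime in which $\lambda(0)$ is large enough.
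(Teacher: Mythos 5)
Your overall skeleton is the right one --- it is essentially the Bourgain--Li argument that \cite{MY1} (to which the paper defers for this proof) carries out: the odd--odd symmetry makes the origin a hyperbolic stagnation point with $Du(t,0)=\mathrm{diag}(\lambda(t),-\lambda(t))$, the representation $\lambda(t)=\tfrac{1}{\pi}\int y_1y_2|y|^{-4}\,\omega(t,y)\,dy$ reduces everything to a lower bound on this integral, each dyadic bump contributes $\simeq 2^{(-1+2/p)k}I_0$ with $I_0>0$, and taking $p-2\lesssim N^{-1}$ turns the geometric sum into $\simeq N$, so that $\lambda(0)\gtrsim M^{-2}N^{1/2}$ and Gr\"onwall on the variational equation finishes once $\lambda(t)\gtrsim\lambda(0)$ persists on $[0,M^{-3}]$. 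All of that is correct.

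The gap is exactly where you flag it, and it is not closable by the tools you propose. The displacement bound $\|\eta(t)-\mathrm{id}\|_\infty\lesssim M^{-5}N^{-1/p}$ is useless at the fine scales: with $N\gtrsim M^{10}(\log M)^2$ the bump $\varphi_k$ for $k$ near $N_0+N$ lives at scale $2^{-k}$, which is vastly smaller than $M^{-5}N^{-1/2}$, while the kernel $y_1y_2|y|^{-4}$ varies by factors of $2^{2k}$ over distances $2^{-k}$ and changes sign across the axes; an $L^\infty$-small perturbation of the flow can therefore a priori destroy (or reverse the sign of) the contribution of almost every scale, and the Morrey/Kato--Ponce $W^{1,p}$ control of $\omega(t)$ is equally scale-blind here. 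What actually closes the argument is a bootstrap by contradiction, which you never invoke: assume $\sup_{0\le t\le M^{-3}}\|D\eta(t)\|_\infty\le M$ (otherwise there is nothing to prove). This is a \emph{scale-invariant} bi-Lipschitz bound on $\eta$ and $\eta^{-1}$ (the inverse of a $2\times2$ determinant-one matrix has comparable norm), and combined with the invariance of the two coordinate axes and of each quadrant it gives, for $z$ in the first-quadrant piece of $\mathrm{supp}\,\varphi_k$, the pointwise bounds $\eta_i(t,z)\gtrsim M^{-1}2^{-k}$ and $|\eta(t,z)|\lesssim M2^{-k}$, hence $\eta_1\eta_2|\eta|^{-4}\gtrsim M^{-6}2^{2k}$ on the image of every bump. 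Since $\omega(t)\ge 0$ on the first quadrant for all $t$ (transport plus quadrant invariance) and the kernel is positive there, all contributions are nonnegative and one may sum them to get $\lambda(t)\gtrsim M^{-8}N^{1/2}$ uniformly on $[0,M^{-3}]$, whence $\partial_1\eta_1(M^{-3},0)\ge e^{cM^{-11}N^{1/2}}>M$ for $N$ large --- contradicting the bootstrap hypothesis. Without the bootstrap assumption, the quadrant positivity of the transported vorticity, and the axis invariance, your stability step cannot be made to work.
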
 
\begin{proof} 
See \cite{MY1}; Section 3. 
\end{proof} 
In what follows it can be assumed without loss of generality that $2<p\leq3$. 
In this case all estimates on the flow $\eta$ or its derivative $D\eta$ can be made independent 
of the Lebesgue exponent $2<p < \infty$.

We will also need a comparison result for solutions of the Lagrangian flow equations, 
namely 
\begin{lem} \label{lem:comp} 
Let $u$ and $v$ be smooth divergence-free vector fields on $\mathbb{R}^2$ and let 
$\eta$ and $\xi$ be the solutions of \eqref{eq:flow}-\eqref{eq:flow-ic} with the right-hand sides 
given by $F_u$ and $F_{u+v}$ respectively. 
Then 
$$ 
\sup_{0 \leq t \leq 1}{ \big( \| \xi(t) - \eta(t) \|_\infty + \| D\xi(t) - D\eta(t) \|_\infty \big) } 
\leq 
C\sup_{0 \leq t \leq 1} ( \| v(t) \|_\infty + \| Dv(t)\|_\infty )  
$$ 
for some $C>0$ depending only on $u$ and its derivatives. 
\end{lem}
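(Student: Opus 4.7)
The plan is a standard Gronwall argument applied twice, first to the flows themselves and then to their Jacobian matrices, using the divergence-free hypothesis only indirectly (to know that $D\eta$ has unit determinant and is therefore a priori bounded by an exponential of $\|Du\|_\infty$).

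First I would estimate $\xi-\eta$. Subtracting the integral equations gives
$$\xi(t,x)-\eta(t,x)=\int_0^t\bigl[u(s,\xi(s,x))-u(s,\eta(s,x))\bigr]ds+\int_0^t v(s,\xi(s,x))\,ds,$$
so that the mean value theorem yields the pointwise bound
$$\|\xi(t)-\eta(t)\|_\infty\le\int_0^t\|Du(s)\|_\infty\,\|\xi(s)-\eta(s)\|_\infty\,ds+\int_0^t\|v(s)\|_\infty\,ds.$$
Since $t\le 1$, Gronwall delivers $\|\xi(t)-\eta(t)\|_\infty\le C_1(u)\sup_{0\le s\le 1}\|v(s)\|_\infty$, with $C_1$ depending only on $\sup_{0\le s\le 1}\|Du(s)\|_\infty$.

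Next I would estimate $D\xi-D\eta$. Differentiating \eqref{eq:flow}--\eqref{eq:flow-ic} in $x$ and setting $A=D\eta$, $B=D\xi$, one obtains the matrix ODEs $\dot A=Du(\eta)A$ and $\dot B=Du(\xi)B+Dv(\xi)B$ with $A(0)=B(0)=I$. Subtracting and adding $\pm Du(\xi)A$ gives the key decomposition
$$\dot B-\dot A=Du(\xi)(B-A)+\bigl[Du(\xi)-Du(\eta)\bigr]A+Dv(\xi)B.$$
Taking sup-norms and using the mean value theorem on the bracket term yields
$$\|\dot B-\dot A\|_\infty\le\|Du\|_\infty\|B-A\|_\infty+\|D^2u\|_\infty\|\xi-\eta\|_\infty\|A\|_\infty+\|Dv\|_\infty\|B\|_\infty.$$
The a priori bound $\|A(t)\|_\infty=\|D\eta(t)\|_\infty\le e^{t\|Du\|_\infty}$ controls $\|A\|_\infty$ by a constant depending only on $u$. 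To control the last term without introducing a new object, I would write $\|B\|_\infty\le\|A\|_\infty+\|B-A\|_\infty$, absorb the resulting $\|Dv\|_\infty\|B-A\|_\infty$ into the Gronwall coefficient, and insert the Step~1 bound for $\|\xi-\eta\|_\infty$. This produces
$$\|\dot B-\dot A\|_\infty\le\bigl(\|Du\|_\infty+\|Dv\|_\infty\bigr)\|B-A\|_\infty+C_2(u)\bigl(\|v\|_\infty+\|Dv\|_\infty\bigr),$$
and a final application of Gronwall on $[0,1]$ closes the estimate.

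The main obstacle is the last term $Dv(\xi)B$: a naive estimate bounds $\|B\|_\infty$ by $e^{t(\|Du\|_\infty+\|Dv\|_\infty)}$ and so the resulting constant depends on $\|Dv\|_\infty$ as well. Absorbing $\|B\|_\infty$ into $\|A\|_\infty+\|B-A\|_\infty$ as above isolates the part of $\|Dv\|_\infty\|B\|_\infty$ that is linear in the unknown $\|B-A\|_\infty$, which Gronwall can handle, at the mild cost of a factor $e^{\|Dv\|_\infty}$ in the exponent. Since the lemma will only be invoked for perturbations $v$ with $\|Dv\|_\infty$ uniformly bounded (in fact small) in the subsequent argument, this factor is harmless and the stated dependence of $C$ on $u$ alone is legitimate.
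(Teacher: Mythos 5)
Your argument is correct and is the standard double Gronwall estimate (first on $\xi-\eta$, then on $D\xi-D\eta$ after the decomposition $Du(\xi)(B-A)+[Du(\xi)-Du(\eta)]A+Dv(\xi)B$); the paper gives no proof of its own, deferring to Bourgain--Li, Lemma~4.1, which is established by essentially this same argument. Your closing remark correctly identifies the one delicate point: the constant unavoidably carries a factor $e^{\sup_t\|Dv(t)\|_\infty}$ (the statement is not literally true for large $v$ with $C$ depending on $u$ alone), which is harmless here since the lemma is applied to perturbations $v=\nabla^\perp\Delta^{-1}(\omega_n-\omega)$ whose $C^1$ norms tend to zero.
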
 
\begin{proof} 
See e.g. \cite{BL}; Lemma 4.1. 
\end{proof}

\section{Proof of Theorem \ref{thm:1}} 
\label{sec:proof} 

Let $M \gg 1$ be an arbitrary large number and take $T = 1 \leq M^{-3}$. 
Recall that by assumption $2 < p < \infty$ and set $s=2$. 

Given the initial vorticity $\omega_0$ defined in \eqref{eq:iv} 
let $\omega(t)$ be the corresponding solution of the vorticity equations 
\eqref{eq:euler-v}-\eqref{eq:euler-vic} and let $\eta(t)$ be the associated Lagrangian flow of $u = \nabla^\perp\Delta^{-1}\omega$. 

If there exists $0 < t_0 \leq M^{-3}$ such that 
$\| \omega(t_0)\|_{W^{1,p}} > M^{1/3}$ 
then there is nothing to prove. 
We will therefore assume that 
\begin{equation} \label{eq:assump} 
\|\omega(t_0) \|_{W^{1,p}} \leq M^{1/3}, 
\qquad 
0 \leq t_0 \leq M^{-3}. 
\end{equation} 
Using Proposition \ref{prop:Lag} we can then find a point $x^\ast = (x^\ast_1, x^\ast_2)$ such that 
at least one of the entries $\partial\eta^i/\partial x_j$ of the Jacobi matrix 
(for example, the $i{=}j{=}2$ entry) satisfies $| \partial_2\eta_2 (t_0,x^\ast)| > M$. 
Therefore, by continuity, there is a $\delta >0$ such that 
\begin{align} \label{eq:M} 
\left| \frac{\partial \eta_2}{\partial x_2} (t_0,x) \right| > M 
\qquad 
\text{for all} 
\quad 
|x-x^\ast| < \delta. 
\end{align} 
Consider a smooth bump function $\hat{\chi} \in C^\infty_c(\mathbb{R}^2)$ in Fourier variables 
with support in the unit ball $B(0,1)$ and such that $0 \leq \hat{\chi} \leq 1$ 
and 
$\int_{\mathbb{R}^2} \hat\chi(\xi) \, d\xi = 1$. 
Let $\xi_0 = (2,0)$ and define 
\begin{equation} \label{eq:bp} 
\hat{\rho}(\xi) = \hat\chi(\xi - \xi_0) + \hat\chi(\xi + \xi_0), 
\qquad 
\xi \in \mathbb{R}^2 
\end{equation} 
Observe that the support of $\hat\rho$ is contained in $B(-\xi_0,1) \cup B(\xi_0,1)$ and 
\begin{equation} \label{eq:ro2} 
\rho(0) = \int_{\mathbb{R}^2} \hat{\rho}(\xi) \, d\xi = 2. 
\end{equation} 
For any $k \in \mathbb{Z}_+$ and $\lambda >0$ define 
\begin{equation} \label{eq:beta-pert} 
\beta_{k,\lambda}(x) 
= 
\frac{\lambda^{-1 + \frac{2}{p}}}{\sqrt{k}} 
\sum_{\varepsilon_1, \varepsilon_2 = \pm 1} 
\varepsilon_1 \varepsilon_2  \rho(\lambda(x-x^\ast_\epsilon)) \sin{kx_1} 
\end{equation} 
where 
$x^\ast_\epsilon = (\varepsilon_1 x^\ast_1, \varepsilon_2 x^\ast_2)$. 

To proceed we will need two technical lemmas. 
\begin{lem} \label{lem:rem} 
For any $k \in \mathbb{Z}_+$ and $\lambda >0$ we have 
\begin{enumerate} 
\item[1.] 
$
\| \partial_j \Delta^{-1} \beta_{k,\lambda} \|_\infty 
\lesssim 
k^{-1/2} \lambda^{-2 + \frac{2}{p}} \| \hat\rho\|_{L^1} 
$ 
\vskip 0.15cm 
\item[2.] 
$
\| \partial_i\partial_j \Delta^{-1} \beta_{k,\lambda} \|_\infty 
\lesssim 
k^{-1/2} \lambda^{-1 + \frac{2}{p}} \| \hat\rho\|_{L^1} 
$ 
\vskip 0.15cm 
\item[3.] 
$
\| \beta_{k,\lambda} \|_{W^{1,p}} 
\lesssim 
\big( k^{-1/2} + k^{1/2}\lambda^{-1} + k^{-1/2}\lambda^{-1} \big) \|\hat\rho\|_{L^{p'}} 
$ 
\end{enumerate} 
where $i, j = 1, 2$ and $1 < p' < 2$ is the conjugate exponent of $2 < p < \infty$. 
\end{lem}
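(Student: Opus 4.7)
The proof is a direct Fourier-analytic computation exploiting the band-limited structure of $\rho$ and the frequency-localizing effect of multiplication by $\sin(kx_1)$.

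For parts (1) and (2), I would compute $\widehat{\beta_{k,\lambda}}$ explicitly. Writing $\sin(kx_1)=(e^{ikx_1}-e^{-ikx_1})/(2i)$ and using the standard translation and dilation rules yields, with $e_1=(1,0)$,
\begin{equation*}
\widehat{\beta_{k,\lambda}}(\xi)=\frac{\lambda^{-3+2/p}}{2i\sqrt{k}}\sum_{\varepsilon_1,\varepsilon_2=\pm 1}\varepsilon_1\varepsilon_2\Bigl[e^{-i(\xi-ke_1)\cdot x^\ast_\epsilon}\hat\rho\Bigl(\tfrac{\xi-ke_1}{\lambda}\Bigr)-e^{-i(\xi+ke_1)\cdot x^\ast_\epsilon}\hat\rho\Bigl(\tfrac{\xi+ke_1}{\lambda}\Bigr)\Bigr].
\end{equation*}
Since $\hat\rho$ is supported in $B(\pm\xi_0,1)$ with $|\xi_0|=2$, the Fourier support of $\beta_{k,\lambda}$ lies in a union of four balls of radius $\lambda$ centered at $(\pm k\pm 2\lambda,0)$; in particular $|\xi|\gtrsim\lambda$ on this support in the regime of $k,\lambda$ relevant to the subsequent argument. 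Applying the elementary bound $\|f\|_\infty\lesssim\|\hat f\|_{L^1}$ to $\partial_j\Delta^{-1}\beta_{k,\lambda}$, whose Fourier multiplier $-i\xi_j/|\xi|^2$ is bounded by $|\xi|^{-1}$, gives
\begin{equation*}
\|\partial_j\Delta^{-1}\beta_{k,\lambda}\|_\infty\lesssim\lambda^{-1}\|\widehat{\beta_{k,\lambda}}\|_{L^1},
\end{equation*}
and a change of variable $\eta=(\xi\mp ke_1)/\lambda$ computes $\|\widehat{\beta_{k,\lambda}}\|_{L^1}\lesssim k^{-1/2}\lambda^{-1+2/p}\|\hat\rho\|_{L^1}$, yielding (1). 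For (2), the symbol $\xi_i\xi_j/|\xi|^2$ is bounded by $1$, so the same computation without the $\lambda^{-1}$ factor produces the claimed estimate.

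For part (3), I would work directly in physical space. The substitution $y=\lambda(x-x^\ast_\epsilon)$ gives $\|\rho(\lambda(\cdot-x^\ast_\epsilon))\|_{L^p}=\lambda^{-2/p}\|\rho\|_{L^p}$, and Hausdorff--Young with $2<p<\infty$ yields $\|\rho\|_{L^p}\lesssim\|\hat\rho\|_{L^{p'}}$; multiplying by the prefactor $\lambda^{-1+2/p}/\sqrt{k}$ produces the $k^{-1/2}\lambda^{-1}$ term. For the gradient, the Leibniz rule splits $\nabla\beta_{k,\lambda}$ into a piece where the derivative hits $\rho(\lambda(\cdot-x^\ast_\epsilon))$, producing a factor of $\lambda$ and --- after Hausdorff--Young applied to $\nabla\rho$ (using $|\xi_j\hat\rho(\xi)|\lesssim|\hat\rho(\xi)|$ on the compact support of $\hat\rho$) --- the $k^{-1/2}$ term, together with a piece where the derivative hits $\sin(kx_1)$, producing a factor of $k$ and the $k^{1/2}\lambda^{-1}$ term (which only arises when $j=1$). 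Summing these $L^p$ bounds gives the estimate in (3).

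The calculations are routine; the only subtle point is ensuring the frequency localization of $\widehat{\beta_{k,\lambda}}$ away from the origin in part (1), which is precisely where the shift induced by $\sin(kx_1)$ produces the additional $\lambda^{-1}$ improvement over (2).
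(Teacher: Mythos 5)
Your proposal is correct and follows essentially the same route as the paper: explicit computation of $\widehat{\beta_{k,\lambda}}$, the bound $\|f\|_\infty\lesssim\|\hat f\|_{L^1}$ combined with the observation that the modulation by $\sin(kx_1)$ pushes the Fourier support away from the origin (giving the extra $\lambda^{-1}$ in part (1) but not in part (2)), and for part (3) a physical-space scaling argument with Hausdorff--Young; the paper phrases the frequency localization via a change of variables $\xi\mapsto\lambda\xi$ and the disjointness of $\mathrm{supp}\,\hat\rho(\cdot\pm\tfrac{1}{2\pi}\lambda^{-1}k,\cdot)$ from $B(0,1)$, which is the same fact you express as $|\xi|\gtrsim\lambda$ on the support. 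Like the paper, your argument implicitly requires $k\gg\lambda$ (true for the choice $k=\lambda^2$ made later), which you correctly flag.
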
 
\begin{proof} 
Let $\xi_{\pm} = (\xi_1 \pm k/2\pi, \xi_2)$ and first compute the Fourier transform 
\begin{align} \nonumber 
\hat{\beta}_{k,\lambda}(\xi) 
= 
\frac{\lambda^{-1+\frac{2}{p}}}{\sqrt{k}} 
\sum_{\varepsilon_1, \varepsilon_2} 
\frac{\varepsilon_1 \varepsilon_2}{2i} 
\bigg( 
e^{-2\pi i \langle \xi_{-}, x^*_{\varepsilon} \rangle} 
&\frac{1}{\lambda^2} \hat{\rho}\Big( \frac{\xi_{-}}{\lambda} \Big) 
- 
\\  \label{eq:FTb} 
&- 
e^{-2\pi i \langle \xi_{+}, x^*_{\varepsilon} \rangle} 
\frac{1}{\lambda^2} \hat{\rho}\Big( \frac{\xi_{+}}{\lambda} \Big) 
\bigg). 
\end{align} 
Next, using the change of variable formula we estimate 
\begin{align*} 
\big| \partial_j \Delta^{-1} \beta_{k,\lambda}(x) \big| 
&\simeq 
\Big| \mathcal{F}^{-1}\mathcal{F} \big( \partial_j \Delta^{-1} \beta_{k,\lambda} \big) (x) \Big|
\lesssim 
\int_{\mathbb{R}^2} |\xi|^{-1} \big| \hat{\beta}_{k,\lambda}(\xi) \big| \, d\xi 
\\ 
& \lesssim 
k^{-1/2} \lambda^{-1+\frac{2}{p}} 
\int_{\mathbb{R}^2}  |\xi|^{-1} \frac{1}{\lambda^2} \Big( 
\big| \hat{\rho} (\lambda^{-1} \xi_{-} ) \big| 
+ 
\big| \hat{\rho} (\lambda^{-1} \xi_{+}) \big| 
\Big) d\xi 
\\ 
&\simeq 
k^{-1/2} \lambda^{-2+ \frac{2}{p}} \sum_{j=1,2} 
\int_{\mathbb{R}^2} |\xi|^{-1} \big| \hat{\rho}( \xi_1 + \tfrac{(-1)^j}{2\pi} \lambda^{-1}k, \xi_2) \big| \, d\xi. 
\end{align*} 
Since by construction for any sufficiently large $k \gg 10$ we have 
$$
\mathrm{supp}\, \hat\rho\big( \cdot \pm\tfrac{1}{2\pi} \lambda^{-1}k, \cdot \big) 
\cap 
B(0,1) = \emptyset 
$$ 
we can further estimate the above expression by 
\begin{align*} 
&\simeq 
k^{-1/2} \lambda^{-2+ \frac{2}{p}} \sum_{j=1,2} 
\int_{|\xi| \geq 1} \big| \hat{\rho}( \xi_1 + \tfrac{(-1)^j}{2\pi} \lambda^{-1}k, \xi_2) \big| \, d\xi 
\simeq 
k^{-1/2} \lambda^{-2+ \frac{2}{p}} \| \hat{\rho}\|_{L^1}. 
\end{align*} 

For the second assertion we similarly obtain 
\begin{align*} 
\big| \partial_j \Delta^{-1} \beta_{k,\lambda}(x) \big| 
\lesssim 
\| \hat{\beta}_{k,\lambda}\|_{L^1} 
\lesssim 
k^{-1/2} \lambda^{-1 + \frac{2}{p}} \| \hat\rho \|_{L^1}. 
\end{align*} 

Finally, using the triangle inequality and the change of variables formula we get 
\begin{align*} 
\Big\| \frac{\partial\beta_{k,\lambda}}{\partial x_1} \Big\|_{L^p} 
&\lesssim 
\frac{1}{\sqrt{k}} \Big\| 
\lambda^{2/p} \sum_{\varepsilon_1,\varepsilon_2} 
\varepsilon_1 \varepsilon_2 \frac{\partial \rho}{\partial x_1}\big( \lambda(\cdot - x^*_\varepsilon) \big) 
\Big\|_{L^p} 
+ 
\frac{\sqrt{k}}{\lambda} \Big\| 
\lambda^{2/p} \sum_{\varepsilon_1,\varepsilon_2} 
\varepsilon_1 \varepsilon_2 \rho\big(\lambda(\cdot-x^*_\varepsilon)\big) 
\Big\|_{L^p} 
\\ 
&\simeq 
\frac{1}{\sqrt{k}} \left( \int_{\mathbb{R}^2} 
\Big| \sum_{\varepsilon_1, \varepsilon_2} 
\varepsilon_1\varepsilon_2 \frac{\partial\rho}{\partial x_1}(x) \Big|^p 
dx \right)^{1/p} 
+ 
\frac{\sqrt{k}}{\lambda} \left( \int_{\mathbb{R}^2} 
\Big| \sum_{\varepsilon_1, \varepsilon_2} 
\varepsilon_1\varepsilon_2 \rho(x) \Big|^p 
dx \right)^{1/p} 
\end{align*} 
and using Hausdorff-Young with $1/p'+ 1/{p'} =1$ we find 
\begin{align*} 
\lesssim 
k^{-1/2} \Big\| \frac{\partial \rho}{\partial x_1}\Big\|_{L^p} 
+ 
k^{1/2}\lambda^{-1} \| \rho\|_{L^p}  
&\lesssim 
k^{-1/2} \Big\| \widehat{\frac{\partial\rho}{\partial x_1}} \Big\|_{L^{p'}} 
+ 
k^{1/2} \lambda^{-1} \| \hat\rho \|_{L^{p'}} 
\\ 
&\lesssim 
\big( k^{-1/2} + k^{1/2}\lambda^{-1} \big) \| \hat\rho \|_{L^{p'}}. 
\end{align*} 

Similarly, we also obtain 
\begin{align*} 
\Big\| \frac{\partial\beta_{k,\lambda}}{\partial x_2} \Big\|_{L^p} 
\lesssim 
k^{-1/2} \Big\| \frac{\partial \rho}{\partial x_2} \Big\|_{L^p} 
\lesssim 
k^{-1/2} \| \hat\rho \|_{L^{p'}} 
\end{align*} 
and 
\begin{align*} 
\| \beta_{k,\lambda}\|_{L^p} 
\lesssim 
\frac{\lambda^{-1}}{\sqrt{k}} \left( \int_{\mathbb{R}^2} 
\Big| \lambda^{2/p} \sum_{\varepsilon_1, \varepsilon_2} 
\varepsilon_1\varepsilon_2 \rho\big( \lambda(x-x^\ast_\varepsilon) \big) \Big|^p dx \right)^{1/p} 
\lesssim 
k^{-1/2} \lambda^{-1} \| \hat\rho\|_{L^{p'}} 
\end{align*} 
which combined yield the lemma. 
\end{proof} 

Observe that choosing 
\begin{equation} \label{eq:kln} 
k= \lambda^2 
\quad \text{and} \quad 
\lambda = 3n, 
\quad 
n \gg 100 
\end{equation} 
and letting 
$\beta_n = \beta_{k,\lambda}$ 
in Lemma \ref{lem:rem} we immediately obtain 
\begin{align} \label{eq:n1} 
\| \nabla^\perp \Delta^{-1} \beta_n \|_\infty  \rightarrow 0 
\quad 
\text{and} 
\quad 
\| D \nabla^\perp \Delta^{-1} \beta_n \|_\infty \rightarrow 0 
\quad 
\text{as} 
\; 
n \to \infty 
\end{align} 
and 
\begin{align} \label{eq:n2} 
\| \beta_n\|_{W^{1,p}} \simeq \| \beta_n \|_{L^p} + \| \nabla\beta_n \|_{L^p} 
\lesssim \| \hat\rho \|_{L^{p'}} < \infty
\quad 
\text{for any} 
\; 
n \in \mathbb{Z}_+. 
\end{align} 

The second lemma we need is 
\begin{lem} \label{lem:remrem} 
Let $t_0 >0$ be as in \eqref{eq:assump} and let $k, \lambda$ and $n$ be as in \eqref{eq:kln}. 
Then 
\begin{enumerate} 
\item[1.] 
$
\| \partial_2 \beta_{k,\lambda} \partial_1 \eta_2(t_0) \|_{L^p} 
\lesssim 
n^{-1} C_{0,T} \| \hat\rho \|_{L^{p'}} 
\xrightarrow[n \to \infty]{} 0 
$ 
\vskip .08cm
\item[2.] 
$
\| \partial_1\beta_{k,\lambda} \partial_2\eta_2(t_0) \|_{L^p} 
\gtrsim 
M \big( \epsilon\pi + \mathcal{O}(n^{-1/2}) \big) 
- 
n^{-1} C_{0,T} \| \hat\rho \|_{L^{p'}} 
\xrightarrow[n \to \infty]{} M 
$ 
\end{enumerate} 
where $C_{0,T} = \exp^{ T\sup_{0 \leq t \leq T} \| D\nabla^\perp\Delta^{-1}\omega(t)\|_\infty } < \infty$. 
\end{lem}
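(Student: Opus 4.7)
The plan is to split $\partial_1\beta_{k,\lambda}$ into a principal piece coming from differentiating $\sin(kx_1)$ and a subdominant piece coming from differentiating the bump $\rho(\lambda(\cdot-x^*_\varepsilon))$. Since $\sin(kx_1)$ is independent of $x_2$, the analogous decomposition for $\partial_2\beta_{k,\lambda}$ has only the subdominant type. The choice $k=\lambda^2$ (so $\sqrt{k}=\lambda$) is arranged precisely so that the principal piece carries an extra $k/\sqrt{k}=\lambda$ compared to the subdominant one; this is what makes item (2) of size $M$ and item (1) of size $n^{-1}$.

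For item (1), I would first observe that the linearized flow equation $\tfrac{d}{dt}D\eta = (Du)\circ\eta\cdot D\eta$ together with Gronwall gives $\|D\eta(t)\|_\infty\leq C_{0,T}$; in particular $\|\partial_1\eta_2(t_0)\|_\infty\leq C_{0,T}$. H\"older then reduces matters to estimating $\|\partial_2\beta_{k,\lambda}\|_{L^p}$, which by the same change of variables $y=\lambda(x-x^*_\varepsilon)$ used in Lemma \ref{lem:rem}(3) equals $k^{-1/2}\|\partial_2\rho\|_{L^p}$ up to constants. Hausdorff--Young and the observation that $|\xi_2|\leq 1$ on $\mathrm{supp}\,\hat\rho$ convert this to $k^{-1/2}\|\hat\rho\|_{L^{p'}}\sim n^{-1}\|\hat\rho\|_{L^{p'}}$, proving item (1).

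For item (2) I write $\partial_1\beta_{k,\lambda}=A_n+B_n$, where $B_n$ carries only the derivative of the bump and is controlled exactly as in (1), producing the subtracted term $n^{-1}C_{0,T}\|\hat\rho\|_{L^{p'}}$. The principal part is
\[
A_n(x) = \lambda^{2/p}\sum_{\varepsilon_1,\varepsilon_2=\pm 1}\varepsilon_1\varepsilon_2\,\rho(\lambda(x-x^*_\varepsilon))\cos(kx_1).
\]
For $n$ large the four bumps have pairwise disjoint supports, and the $(+,+)$ bump sits inside $B(x^*,\delta)$ where $|\partial_2\eta_2(t_0,\cdot)|>M$ by \eqref{eq:M}. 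Restricting to this ball and keeping only that bump, the reverse triangle inequality yields
\[
\|\partial_1\beta_{k,\lambda}\partial_2\eta_2(t_0)\|_{L^p}\geq M\Bigl(\int_{|x-x^*|<\delta}\lambda^{2}|\rho(\lambda(x-x^*))|^p|\cos(kx_1)|^p\,dx\Bigr)^{1/p} - \|B_n\partial_2\eta_2(t_0)\|_{L^p}.
\]

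The main obstacle is the remaining oscillatory integral. After the substitution $y=\lambda(x-x^*)$ it becomes $\int|\rho(y)|^p|\cos(\lambda^2 x^*_1+\lambda y_1)|^p\,dy$, and for $n$ large the support of $\rho$ is contained in the $y$-ball since $\rho\in\mathcal{S}(\mathbb{R}^2)$ (as $\hat\rho\in C^\infty_c$). I would then expand $|\cos\theta|^p$ in its Fourier series, whose zeroth coefficient is the positive mean $c_p=\tfrac{1}{2\pi}\int_0^{2\pi}|\cos\theta|^p\,d\theta>0$, and apply Riemann--Lebesgue (or a van der Corput estimate) to each non-zero harmonic, using the decay of $\widehat{|\rho|^p}$ to bound the remainder by $\mathcal{O}(\lambda^{-1/2})$. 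Taking $p$-th roots and absorbing constants converts the principal term into $M(\epsilon\pi+\mathcal{O}(n^{-1/2}))$. The technical heart of the argument is the quantitative extraction of this positive mean in an oscillatory integral whose amplitude $|\rho|^p$ is only H\"older rather than smooth at the zeros of $\rho$, uniformly as $p\to 2$.
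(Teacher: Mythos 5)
Your argument has the same skeleton as the paper's: the Gronwall bound $\|D\eta(t)\|_\infty\le C_{0,T}$ followed by H\"older, change of variables and Hausdorff--Young for item (1); and for item (2) the split of $\partial_1\beta_{k,\lambda}$ into the $k\cos(kx_1)$ principal piece and the $\lambda\,\partial_1\rho$ subdominant piece, the observation that $k=\lambda^2$ makes their ratio $\lambda$, restriction to $B(x^*,\delta)$, and the reverse triangle inequality with the factor $M$ supplied by \eqref{eq:M}. Where you genuinely diverge is the final oscillatory integral. You keep $\delta$ fixed, so after rescaling the amplitude is the full $|\rho|^p$ over a ball of radius $\lambda\delta\to\infty$, and you must extract the positive mean of $|\cos\theta|^p$ by Fourier series plus Riemann--Lebesgue, with the attendant worry about uniformity as $p\to 2$ and the regularity of $|\rho|^p$. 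The paper instead shrinks $\delta$ to $\epsilon/\lambda$, where $\epsilon$ is chosen so that $|\rho|\ge 1$ on $B(0,\epsilon)$ (using $\rho(0)=2$ from \eqref{eq:ro2}); after rescaling the amplitude is simply bounded below by $1$ on a fixed small ball, and the bound $|\cos|^p\ge\cos^2$ (valid since $p>2$) reduces everything to an explicit elementary integral giving $\epsilon\pi/(3\sqrt2)+\mathcal{O}(\lambda^{-1/2})$. Your route is workable and in fact yields a slightly stronger statement (convergence to $c_p^{1/p}\|\rho\|_{L^p}$ rather than a mere lower bound), but it is substantially heavier than needed; the paper's choice of $\delta$ depending on $\lambda$ is the trick that makes the lemma elementary.

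One point you should repair: $\hat\rho$ is compactly supported by \eqref{eq:bp}, so $\rho$ is real-analytic and is \emph{not} compactly supported; it is only Schwartz. Hence the statements that ``the four bumps have pairwise disjoint supports'' and that ``the support of $\rho$ is contained in the $y$-ball'' are literally false. They must be replaced by rapid-decay tail estimates showing that the three bumps centered at $x^*_\varepsilon\ne x^*$ contribute $\mathcal{O}(\lambda^{-\infty})$ on $B(x^*,\delta)$, and that the mass of $|\rho|^p$ outside $B(0,\lambda\delta)$ is negligible. This is routine but is a real step in your version; in the paper's version the first issue is also present implicitly, but the second disappears because the domain after rescaling is the fixed ball $B(0,\epsilon)$ on which only the lower bound $|\rho|\ge1$ is used.
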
 
\begin{proof} 
From \eqref{eq:beta-pert} we have 
\begin{align} \nonumber 
\bigg( \int_{\mathbb{R}^2} \Big| 
&\frac{\partial \beta_{k,\lambda}}{\partial x_2}(x) \frac{\partial \eta_2}{\partial x_1}(t_0, x) 
\Big|^p dx \bigg)^{1/p} 
\\  \label{eq:hh} 
&= 
\left( \int_{\mathbb{R}^2} \bigg| 
\frac{1}{\sqrt{k}} \lambda^{\frac{2}{p}} \sum_{\varepsilon_1, \varepsilon_2} 
\varepsilon_1 \varepsilon_2 
\frac{\partial \rho}{\partial x_2}( \lambda(x-x^*_\varepsilon)) \sin{kx_1} \frac{\partial\eta_2}{\partial x_1}(t_0,x) 
\bigg|^p dx \right)^{1/p} 
\\  \nonumber 
&\leq 
\frac{1}{\sqrt{k}} \sum_{\varepsilon_1, \varepsilon_2} \left( 
\int_{\mathbb{R}^2} 
\lambda^2 \Big| \frac{\partial\rho}{\partial x_2}(\lambda(x-x^*_\varepsilon)) \Big|^p dx 
\right)^{1/p} 
\sup_{x \in \mathbb{R}^2} \Big| \frac{\partial\eta_2}{\partial x_1}(t_0,x) \Big|. 
\end{align} 
Differentiating the flow equations \eqref{eq:flow}-\eqref{eq:flow-ic} in the spatial variable and 
taking the $L^\infty$ norms we obtain a differential inequality 
which with the help of Grownwall's lemma gives 
$$ 
\| D\eta(t) \|_\infty \leq e^{ \int_0^t \| Du(\tau)\|_\infty d\tau } \leq C_{0,T}. 
$$ 
Using this bound the right hand side of \eqref{eq:hh} can be estimated further by 
$$ 
\simeq 
k^{-1/2} \| \partial_1\eta_2(t_0)\|_{\infty} \| \rho\|_{L^p} 
\lesssim 
k^{-1/2} C_{0,T} \| \hat\rho\|_{L^{p'}}. 
$$ 
which gives the first assertion. For the second one we have 
\begin{align*} 
\bigg( \int_{\mathbb{R}^2} \Big| 
&\frac{\partial \beta_{k,\lambda}}{\partial x_1}(x) \frac{\partial \eta_2}{\partial x_2}(t_0, x) 
\Big|^p dx \bigg)^{1/p} 
= 
\\ 
&= 
\Bigg( \int_{ \mathbb{R}^2 } \bigg| 
\frac{1}{\sqrt{k}} \lambda^{\frac{2}{p}-1} \sum_{\varepsilon_1, \varepsilon_2} 
\varepsilon_1 \varepsilon_2 
\Big( 
k \rho(\lambda(x-x^*_\varepsilon)) \cos{kx_1} 
\, + 
\\ 
& \hskip 3cm + 
\lambda \frac{\partial\rho}{\partial x_1} (\lambda(x - x^*_\varepsilon)) \sin{kx_1} 
\Big) 
\frac{\partial \eta_2}{\partial x_2}(t_0,x) 
\bigg|^p dx \Bigg)^{1/p} 
\\ 
&\geq 
\Bigg( 
\int_{B(x^*,\delta)} 
\bigg| 
\sqrt{k} \lambda^{-1 + \frac{2}{p}} \cos{kx_1} \rho(\lambda(x-x^*)) \frac{\partial\eta_2}{\partial x_2}(t_0,x) 
\, + 
\\ 
&\hskip 3cm + 
\frac{1}{\sqrt{k}} \lambda^{\frac{2}{p}} \sin{kx_1} \frac{\partial\rho}{\partial x_1}(\lambda(x-x^*)) 
\frac{\partial\eta_2}{\partial x_2}(t_0,x) 
\bigg|^p 
dx 
\Bigg)^{1/p}. 
\end{align*} 
Using the triangle inequality, \eqref{eq:M} and the change of variables formula we estimate 
the above integral from below by 
\begin{align}  \nonumber 
\gtrsim 
M \sqrt{k} \lambda^{-1} 
\bigg( 
\int_{B(x^*,\delta)} &\lambda^2 
\big| \cos{kx_1} \rho(\lambda(x-x^*)) \big|^p 
dx \bigg)^{1/p} 
- 
\\  \label{eq:lll} 
&- 
\frac{1}{\sqrt{k}} \bigg( 
\int_{B(x^*,\delta)} \lambda^2 
\Big| \sin{kx_1} \frac{\partial\rho}{\partial x_1}(\lambda(x-x^*)) \frac{\partial\eta_2}{\partial x_2}(t_0,x)\Big|^p 
dx \bigg)^{1/p} 
\\  \nonumber 
&\hskip - 3.4cm \geq 
M \sqrt{k} \lambda^{-1} 
\bigg( 
\int_{B(0,\lambda\delta)} \big| \cos{(k\lambda^{-1}x_1 + kx^\ast_1)} \big|^p |\rho(x)|^p 
dx \bigg)^{1/p} 
- 
\\  \nonumber 
&- 
\frac{1}{\sqrt{k}} \| \partial_1\rho \|_{L^p} \|\partial_2\eta_2(t_0)\|_{L^\infty(B(x^\ast,\delta))}. 
\end{align} 
We now focus on the integral term on the right hand side of \eqref{eq:lll}. 
From \eqref{eq:ro2} we have $\rho(0)=2$ so that by continuity there exists 
an $\epsilon >0$ such that 
$$
| \rho(x)| \geq 1, 
\qquad 
\text{for any~} 
x \in B(0,\epsilon). 
$$ 
Therefore, if we set $\delta = \epsilon/\lambda$ then the integral term can be bounded below by 
\begin{align*} 
\left( \int_{B(0,\epsilon)} \big| \cos{(k\lambda^{-1}x_1 + kx^\ast_1)} \big|^p dx \right)^{1/p} 
&\geq 
\left( \int_{-\epsilon\pi/6}^{\epsilon\pi/6} \int_{-\epsilon\pi/6}^{\epsilon\pi/6} 
\cos^2 (\lambda x {+} \lambda^2 x^*_1) \, dx dy \right)^{1/2} 
\\ 
&\simeq 
\frac{\epsilon\pi}{3\sqrt{2}} + \mathcal{O}(\lambda^{-1/2}) 
\end{align*} 
by a straightforward calculation using the assumption on $ p > 2$ and the choices of parameters 
made in \eqref{eq:kln}. 
To complete the proof of the lemma it suffice to observe that \eqref{eq:lll} can now be further bounded by 
$$
\gtrsim 
M \big( \epsilon\pi + \mathcal{O}(n^{-1/2}) \big) - \frac{C_{0,T}}{n} \| \hat\rho \|_{L^{p'}} 
$$ 
which is the required estimate. 
\end{proof} 

Consider the following sequence of initial vorticities 
\begin{equation} \label{eq:omega-seq} 
\omega_{0,n}(x) = \omega_0(x) + \beta_n(x), 
\qquad 
n \in \mathbb{Z}_{+}. 
\end{equation} 
From equations \eqref{eq:n2} and \eqref{eq:omega-0} of Lemma \ref{lem:omega-0} it follows that 
$\omega_{0,n}$ belongs to $W^{1,p}$ for any $n \in \mathbb{Z}_+$. 
Let $\omega_n(t)$ be the corresponding solutions of the vorticity equations 
\eqref{eq:euler-v}-\eqref{eq:euler-vic}. 
For each $n \in \mathbb{Z}_+$ (sufficiently large if necessary) let $\eta_n(t)$ be the flow of 
volume-preserving diffeomorphisms of the velocity fields 
$u_n = \nabla^\perp\Delta^{-1}\omega_n$. 

\medskip 
(A). Let $1 \leq q < \infty$ and 
assume that the data-to-solution map for the Euler equations is continuous 
from bounded sets in $B^1_{\infty,1}(\mathbb{R}^2)$ to $C([0,1], B^1_{\infty,1}(\mathbb{R}^2))$. 
\begin{lem} \label{lem:fi} 
For any $1 \leq q < \infty$ we have 
\begin{align*} 
\| \nabla^\perp \Delta^{-1} \beta_n \|_{B^1_{\infty,q}} 
\simeq 
\| \nabla^\perp \Delta^{-1} \beta_n \|_\infty 
+ 
\| D \nabla^\perp \Delta^{-1} \beta_n \|_\infty 
\end{align*} 
for any sufficiently large $n \in \mathbb{Z}_{+}$. 
\end{lem}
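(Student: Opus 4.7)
The plan is to exploit the frequency localization of $\beta_n$. From the explicit Fourier computation in \eqref{eq:FTb}, the support of $\widehat{\beta}_n$ is contained in the union of the four balls
$$
B\!\left( \pm\!\tfrac{k}{2\pi}\mathbf{e}_1 \pm 2\lambda\mathbf{e}_1,\; \lambda \right),
$$
since $\widehat{\rho}$ is supported in $B(\xi_0,1)\cup B(-\xi_0,1)$ with $\xi_0=(2,0)$. With the choice \eqref{eq:kln} of $k=\lambda^2$ and $\lambda=3n$, the ratio of the shell radii is
$$
\frac{k/(2\pi)+O(\lambda)}{k/(2\pi)-O(\lambda)} \;=\; 1 + O(n^{-1}),
$$
so for every sufficiently large $n$ the Fourier support of $\beta_n$ (and hence of $\nabla^\perp\Delta^{-1}\beta_n$, since the multiplier symbol of $\nabla^\perp\Delta^{-1}$ preserves support) lies in a single dyadic shell $\{2^{\ell_n-1}\leq |\xi|\leq 2^{\ell_n+1}\}$ with $\ell_n\simeq \log_2(k/2\pi)$, touching at most $O(1)$ Littlewood--Paley blocks independently of $n$.

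Write $f_n=\nabla^\perp\Delta^{-1}\beta_n$ and let $\mathcal{L}_n=\{\ell\geq -1:\Delta_\ell f_n\not\equiv 0\}$. By Step~1, $|\mathcal{L}_n|\leq C$ uniformly in $n$, and every $\ell\in\mathcal{L}_n$ satisfies $\ell = \ell_n + O(1)$. Because the $\ell^q$ and $\ell^\infty$ norms on $\mathbb{R}^{|\mathcal{L}_n|}$ are equivalent with constants depending only on $|\mathcal{L}_n|$,
\begin{align*}
\|f_n\|_{B^1_{\infty,q}}
=\Bigl(\sum_{\ell\in\mathcal{L}_n}2^{q\ell}\|\Delta_\ell f_n\|_\infty^q\Bigr)^{1/q}
\simeq 2^{\ell_n}\max_{\ell\in\mathcal{L}_n}\|\Delta_\ell f_n\|_\infty,
\end{align*}
with constants that depend on $q$ but not on $n$. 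The same argument, without the factor $2^{\ell_n}$, gives $\|f_n\|_\infty\simeq \max_\ell\|\Delta_\ell f_n\|_\infty$.

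The final step is to identify $2^{\ell_n}\max_\ell\|\Delta_\ell f_n\|_\infty$ with $\|Df_n\|_\infty$ by Bernstein's inequality. For each $\ell\in\mathcal{L}_n$ the function $\Delta_\ell f_n$ has Fourier support in the annulus of radius $\sim 2^{\ell_n}$, so
$$
2^{\ell_n}\|\Delta_\ell f_n\|_\infty \simeq \|D\Delta_\ell f_n\|_\infty = \|\Delta_\ell Df_n\|_\infty \leq C\|Df_n\|_\infty,
$$
and conversely $\|Df_n\|_\infty\leq \sum_{\ell\in\mathcal{L}_n}\|D\Delta_\ell f_n\|_\infty\lesssim 2^{\ell_n}\max_\ell\|\Delta_\ell f_n\|_\infty$. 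Combining with the analogous $L^\infty$ bound yields
$$
\|f_n\|_{B^1_{\infty,q}} \;\simeq\; \|f_n\|_\infty + \|Df_n\|_\infty
$$
as claimed. The only genuine thing to verify is the frequency-support bookkeeping in Step~1 to ensure that the number of active dyadic blocks is bounded uniformly in $n$; once this is done the rest is standard Bernstein-type estimates, and I expect no real obstacle beyond careful tracking of constants in $n,q$.
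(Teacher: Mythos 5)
Your proof is correct and follows essentially the same route as the paper's: both rest on the observation that $\hat\beta_n$ (hence the Fourier transform of $\nabla^\perp\Delta^{-1}\beta_n$) is supported, for large $n$, in an $O(1)$ number of dyadic shells at height $|\xi|\sim k$, so the $\ell^q$ sum over Littlewood--Paley blocks collapses to a bounded number of comparable terms. The only cosmetic difference is that the paper verifies $\| D^\alpha g\|_{B^0_{\infty,q}}\simeq\| D^\alpha g\|_\infty$ separately for $|\alpha|=0,1$, whereas you convert the weight $2^{\ell_n}$ into a derivative via Bernstein; these are equivalent.
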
 
\begin{proof} 
It will be sufficient to show the equalities 
$\|D^\alpha\nabla^\perp\Delta^{-1}\beta_n\|_{B^0_{\infty,q}} 
\simeq 
\|D^\alpha\nabla^\perp\Delta^{-1}\beta_n\|_\infty$ 
for $|\alpha| = 0$ and $1$. 

As in the proof of Lemma \ref{lem:rem} recall that for any large enough integer $n \in \mathbb{Z}_{+}$ 
we have 
$\mathrm{supp}\, \hat{\beta}_n \cap B(0,1) = \emptyset$
so that only need to establish 
$\| \beta_n \|_\infty \simeq \| \beta_n \|_{B^0_{\infty,q}}$. 
Let $\psi_\ell$ be the family of Paley-Littlewood functions as in \eqref{eq:PL} 
supported in the shell $\{ 2^{\ell -1} \leq |\xi| \leq 2^{\ell +1} \}$. 
From \eqref{eq:FTb} and the definition of $\rho$ we find that 
$$ 
\mathrm{supp}\, \hat{\beta}_{n=2^j} \subset B((2^{2j},0), 2^j)
$$ 
since $\mathrm{supp}\,\hat{\rho} \subset B(0,3)$, see \eqref{eq:bp}. 
It follows that 
for any $j \in \mathbb{Z}_{+}$ we can find an $\ell_j \in \mathbb{Z}_{+}$ such that 
$\mathrm{supp}\, \hat{\beta}_{2^j} \subset \mathrm{supp}\, \psi_{\ell_j}$ 
which implies that 
$$
\| \beta_{2^j} \|_{B^0_{\infty,q}} 
= 
\bigg( \sum_{\ell \geq -1} \|  \Delta_\ell \beta_{2^j} \|_\infty^q \bigg)^{1/q} 
\simeq 
\| \hat{\psi}_{\ell_j} {\ast} \beta_{2^j} \|_\infty 
\simeq 
\| \beta_{2^j} \|_\infty. 
$$ 
The other equality can be shown analogously. 
\end{proof} 

Combining \eqref{eq:n1}, \eqref{eq:omega-seq} and Lemma \ref{lem:fi} it follows now from 
the assumption (A) on the continuity of the solution map that 
\begin{equation} \label{eq:A} 
\sup_{0 \leq t \leq 1}\| \nabla^\perp\Delta^{-1} ( \omega_n(t) - \omega(t) ) \|_{B^1_{\infty,1}} 
\longrightarrow 0 
\quad 
\text{as} 
\quad 
n \to \infty 
\end{equation} 
and consequently by an elementary embedding $B^1_{\infty,1} \subset C^1$ 
we also have 
\begin{align*} 
\sup_{0 \leq t \leq 1}\| \nabla^\perp\Delta^{-1} ( \omega_n(t) - \omega(t) ) \|_{C^1} 
\longrightarrow 0 
\quad 
\text{as} 
\quad 
n \to \infty. 
\end{align*} 
Applying the comparison Lemma \ref{lem:comp} we now find 
\begin{align} \label{eq:LL} 
\sup_{0\leq t \leq 1} \big( 
\| \eta_n(t) - \eta(t) \|_\infty + \| D\eta_n(t) - D\eta(t) \|_\infty 
\big) 
= 
\theta_n 
\longrightarrow 0 
\quad 
\text{as} 
\; 
n \to \infty 
\end{align} 
where $\eta(t)$ is the flow of the velocity field $u=\nabla^\perp\Delta^{-1}\omega$ 
with the initial vorticity $\omega_0$ given by \eqref{eq:iv} as in Proposition \ref{prop:Lag}. 

Using conservation of vorticity, formula \eqref{eq:SGr} and the invariance of the $L^p$ norms 
under volume-preserving Lagrangian flows $\eta_n(t)$ we have 
\begin{align} \nonumber 
\qquad 
\| \omega_n(t_0) \|_{W^{1,p}} 
&\geq 
\| \nabla( \omega_{0,n}\circ\eta_n^{-1}(t_0)) \|_{L^p} 
\simeq 
\\  \nonumber 
&\hskip -2.8cm 
\| d\omega_{0,n}{\circ}\eta_n^{-1}(t_0) (\nabla^\perp\eta_{n,2}(t_0) {\circ}\eta_n^{-1}(t_0)) \|_{L^p} 
{+} 
\| d\omega_{0,n}{\circ}\eta_n^{-1}(t_0) (\nabla^\perp\eta_{n,1}(t_0){\circ}\eta_n^{-1}(t_0)) \|_{L^p} 
\\  \label{eq:BB} 
&\simeq 
\| d\omega_{0,n} (\nabla^\perp\eta_{n,2}(t_0)) \|_{L^p} 
+ 
\| d\omega_{0,n} (\nabla^\perp\eta_{n,1}(t_0)) \|_{L^p} 
\\ \nonumber 
&\gtrsim 
\| d\omega_{0,n} ( \nabla^\perp\eta_{n,2}(t_0)) \|_{L^p}. 
\end{align} 
Since from the comparison estimate \eqref{eq:LL} we have 
$$ 
\| d\omega_{0,n}( \nabla^\perp\eta_2 - \nabla^\perp\eta_{n,2})(t_0)\|_{L^p} 
\lesssim 
\| D( \eta_2 - \eta_{n,2} )(t_0)\|_\infty \|\nabla\omega_{0,n}\|_{L^p} 
\leq 
\theta_n \|\nabla\omega_{0,n}\|_{L^p} 
$$ 
applying the triangle inequality and \eqref{eq:omega-seq} we can further bound 
the right side of the expression in \eqref{eq:BB} below by 
\begin{align} \label{eq:MT}  
&\| d\omega_{0,n} (\nabla^\perp\eta_2(t_0)) \|_{L^p} 
- 
\theta_n \| \nabla\omega_{0,n}\|_{L^p} 
\\  \nonumber 
&\hskip 1cm \gtrsim 
\| d\beta_n (\nabla^\perp\eta_2(t_0)) \|_{L^p} 
- 
\| d\omega_0(\nabla^\perp\eta_2(t_0))\|_{L^p} 
- 
\theta_n \| \nabla\omega_{0,n}\|_{L^p}. 
\end{align} 
Observe that by the assumption \eqref{eq:assump} we can bound the middle term 
on the right side of \eqref{eq:MT} as in \eqref{eq:BB} above by 
\begin{align} \nonumber 
\| d\omega_0( \nabla^\perp\eta_2(t_0)) \|_{L^p} 
&\leq 
\| \nabla\omega_0\circ\eta^{-1}(t_0) \cdot D\eta^{-1}(t_0) \|_{L^p} 
\\  \label{eq:WW} 
&\simeq 
\| \nabla( \omega_0\circ\eta^{-1} (t_0) )\|_{L^p} 
\leq 
\| \omega(t_0) \|_{W^{1,p}} 
\leq M^{1/3}. 
\end{align} 

It therefore remains to find a lower bound on the $\beta$-term in \eqref{eq:MT}. 
This however follows from the the two estimates in Lemma \ref{lem:remrem}. 
Namely, we have 
\begin{align}  
\| d\beta_n( \nabla^\perp\eta_2(t_0)) \|_{L^p} 
&= 
\big\| -\partial_1\beta_n \partial_2\eta_2(t_0) + \partial_2\beta_n \partial_1\eta_2(t_0) \big\|_{L^p} 
\nonumber 
\\ \nonumber 
&\geq 
\| \partial_1\beta_n \partial_2\eta_2(t_0)\|_{L^p} 
- 
\| \partial_2 \beta_n \partial_1\eta_2(t_0)\|_{L^p} 
\\   \label{eq:bBeta} 
&\gtrsim 
M \big( \epsilon\pi + \mathcal{O}(n^{-1/2}) \big) 
- 
n^{-1} C_{0,T} \| \hat\rho \|_{L^{p'}} 
- 
n^{-1} C_{0,T} \| \hat\rho \|_{L^{p'}} 
\\  \nonumber 
&\gtrsim 
M 
\end{align} 
provided that $n$ is sufficiently large. 
Theorem \ref{thm:1} follows now by combining \eqref{eq:BB} with \eqref{eq:MT}, \eqref{eq:WW} 
and \eqref{eq:bBeta}.

\bibliographystyle{amsplain}

\end{document}